\documentclass[11pt]{article}
\usepackage{amsmath, amssymb, amsthm}
\usepackage{bbm}
\usepackage{enumitem}
\usepackage{hyperref}
\usepackage{tikz}
\usetikzlibrary{calc}
\usepackage{xcolor}
\usepackage{multirow}
\usepackage{comment}
\usepackage{authblk}

\newtheorem{theorem}{Theorem}[section]
\newtheorem{lemma}{Lemma}[section]

\theoremstyle{remark}

\newcommand{\N}{\mathbb{N}}
\newcommand{\R}{\mathbb{R}}

\newcommand{\Z}{\mathbb{Z}}

\usepackage[left=1in, right=1in, bottom = 1in, top = 1in]{geometry}
\title{Law of the  Iterated Logarithm for $p$-Walks on $\mathbb{Z}$}
\author{Robin Kaiser}

\date{}
\begin{document}

\maketitle
\begin{abstract}
    The $p$-rotor walk on $\mathbb{Z}$ is a self-interacting walk that interpolates between the simple random walk and the deterministic rotor walk. While the weak convergence of this model to a perturbed Brownian motion is known, its almost sure asymptotic boundaries have not been characterized. In this paper, we establish the exact Law of the Iterated Logarithm (LIL) for the $p$-rotor walk. Utilizing the decomposition of the walk into a martingale perturbed by its running extrema, we obtain first a functional Law of the Iterated Logarithm for the linearly interpolated paths of the $p$-walk. We then obtain the classical LIL constants by solving a calculus of variations problem over the perturbed Strassen set.
\end{abstract}

{\it Keywords:} $p$-walks, integers, law of the iterated logarithm, variational calculus, Strassen, functional limits, perturbation.

{2020 Mathematics Subject Classification.} 60G42, 60F05, 60K35.

\section{Introduction}
Originally introduced as the Eulerian walker in \cite{PDDK96}, and later popularized in the mathematical community by Propp \cite{P03} as a deterministic counterpart to simple random walks, the rotor-router model has garnered widespread interest from mathematicians. Despite its purely deterministic evolution in time, the aggregate of particles in the rotor-router model approximates the behaviour of simple random walks in remarkable detail \cite{CS06,HP10,LP08,LP09}. Concerning the behaviour of a single moving particle in the rotor dynamics, the recurrence and transience properties have been studied extensively on different graphs \cite{AH12,FLP16,HSH20,HMSH15,KSH24,AH11}; a problem which is still open on the two-dimensional Euclidean lattice due to its notorious difficulty.

In \cite{HLSH18}, the authors introduced a model, which approximates the behaviour of simple random walks even more closely, by interpolating between the deterministic rotor-router dynamics and the pervasive stochastic fluctuations of random walks: $p$-walks.  In this model, each vertex is equipped with a rotor. When the walker visits a vertex, the rotor "breaks" with probability $p \in (0,1)$, leaving its direction unchanged, or "flips" with probability $1-p$. The walker then steps in the direction of the updated rotor. When $p = 1/2$, the model perfectly recovers the simple random walk; as $p$ tends towards $0$, it mimics the absolute rigidity of the deterministic rotor walk.

The main result of \cite{HLSH18} was resolving the central limit behavior of this interpolating $p$-walk. By establishing a decomposition of the walk into a bounded martingale perturbed by its own historical extrema (up to a bounded error), they proved a functional central limit theorem (FCLT), showing that the scaled macroscopic path converges weakly to a perturbed Brownian motion.

However, while the distributional limits of the $p$-rotor walk on $\Z$ are now well understood, its almost sure pathwise fluctuations have remained an open question. Historically, the measure of a walk's extreme spatial fluctuations is the Law of the Iterated Logarithm (LIL). First formulated by Khintchine in 1924 \cite{K24} and Kolmogorov in 1929 \cite{K29}, and later elevated to a functional principle by Strassen in 1964 \cite{S64}, the LIL dictates the absolute asymptotic envelope that bounds the trajectories of the process almost surely. The purpose of this note is to complete the asymptotic picture of the $p$-rotor walk by establishing its exact Law of the Iterated Logarithm. 

For stating our main result, we introduce the $(a,b)$-random initial rotor configurations as in \cite{HLSH18}. We define for $a,b\in[0,1]$ the two-sided initial rotor configuration $(\rho_0(x))_{x\in\Z}$ such that
\begin{align*}
    \forall x>0:&\mathbb{P}(\rho_0(x)=1)=1-\mathbb{P}(\rho_0(x)=-1)=a,\\
    \forall x<0:&\mathbb{P}(\rho_0(x)=1)=1-\mathbb{P}(\rho_0(x)=-1)=1-b,\\
    &\mathbb{P}(\rho_0(0)=1)=1-\mathbb{P}(\rho_0(0)=-1)=\frac{1}{2}.
\end{align*}

\begin{theorem}\label{thm:main}
    Let $(X_n)_{n\in\N}$ be the $p$-rotor walk on $\mathbb{Z}$ with parameter $p \in (0,1)$ and $(a,b)$-random initial rotor configuration where $a, b \in [0,1]$. Then almost surely the following limits hold
    $$\limsup_{n \to \infty} \frac{X_n}{\sqrt{2n \log \log n}} = \frac{2p\sqrt{p(1-p)}}{p-(2p-1)a}\hspace{0.5cm}
    \text{and}\hspace{0.5cm}
    \liminf_{n \to \infty} \frac{X_n}{\sqrt{2n \log \log n}} = -\frac{2p\sqrt{p(1-p)}}{p-(2p-1)b}.$$
\end{theorem}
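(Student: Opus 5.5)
The plan is to transfer Strassen's functional LIL through the martingale decomposition of \cite{HLSH18}, and then read off the two constants from a variational problem.

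\textbf{Step 1: decomposition.} By \cite{HLSH18}, with $S_n=\max_{k\le n}X_k$ and $I_n=\min_{k\le n}X_k$, there are constants $\alpha,\beta$ depending on $p,a,b$ and a martingale $(M_n)$ with bounded increments such that
\[
X_n = M_n + \alpha S_n + \beta I_n + O(1).
\]
The $(a,b)$-randomness of the unvisited rotors enters only through $\alpha$ and $\beta$. I will first recompute the conditional variances of the increments and check that $\frac1n\langle M\rangle_n\to\sigma^2$ almost surely. On $\Z$ this holds because, away from the extrema, the variance per step is a deterministic function of $p$ (the rotor at the current site is a function of the past). The stray terms are $O(1)$ or $o(n)$, handled by the ergodic/renewal structure already used in \cite{HLSH18}.

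\textbf{Step 2: functional LIL for $M$.} Given Step 1, the martingale functional LIL (Stout/Strassen for martingales with bounded increments and $\langle M\rangle_n/n\to\sigma^2$) applies to the interpolated paths $m_n(t)=M_{\lfloor nt\rfloor}/\sqrt{2n\log\log n}$. It gives that $(m_n)$ is a.s. relatively compact in $C[0,1]$ with limit set $\sigma K$. Here $K$ is the Strassen set of absolutely continuous $f$ with $f(0)=0$ and $\int_0^1 f'^2\le1$.

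\textbf{Step 3: transfer to $X$.} The perturbed equation $x=f+\alpha\sup x+\beta\inf x$ defines a map $\Phi:f\mapsto x$. For $\alpha,\beta<1$ (which holds since $0<p<1$), this map is well defined and Lipschitz on $C[0,1]$ by the standard contraction argument of Carmona--Petit--Yor / Davis for perturbed Brownian motion. The scaled $X$-paths equal $\Phi(m_n)+o(1)$ uniformly. Continuity of $\Phi$ then gives the functional LIL for $X$: the limit set is $\Phi(\sigma K)$.

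\textbf{Step 4: the variational problem.} It remains to compute $\sup\{x(1):x\in\Phi(\sigma K)\}$, and symmetrically the infimum. This is the step I expect to be the main obstacle. The naive candidate is $f(t)=\sigma t$. It gives $x$ increasing, $\inf x=0$, hence $x(1)=\sigma/(1-\alpha)$.

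One must rule out the strategy "first go down to make $\beta\inf x$ work in one's favour, then climb". That strategy matters exactly in the regime where the sign of $\beta$ makes a negative excursion profitable, i.e. on one side of $p=1/2$. I would handle it as follows. Let the path first reach level $-u$ (cost: energy $\ge u^2/(\sigma^2 s)$ on $[0,s]$), then climb from its current level to the final maximum (energy $\ge(\text{rise})^2/(\sigma^2(1-s))$ by Cauchy--Schwarz). Writing $x(1)$ in terms of $u$ and the rise, the energy constraint becomes a two-parameter optimisation. A Lagrange computation should show that the optimum is at $u=0$, $s=0$, since the gain from $\beta$ is at most linear in $u$ while the cost is quadratic in $u$ and also eats the climbing time. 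Lower bounds come from exhibiting the linear $f$, which yields the value $\sigma/(1-\alpha)$.

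\textbf{Step 5: conclusion.} Finally, evaluating $x\mapsto x(1)$ on the compact limit set gives both $\limsup$ and $\liminf$. Plugging in the explicit $\sigma,\alpha$ from \cite{HLSH18} should give $\sigma/(1-\alpha)=2p\sqrt{p(1-p)}/(p-(2p-1)a)$. Sanity check: at $p=1/2$ both constants equal $1$. The $\liminf$ follows by the symmetric argument with $b$ in place of $a$.
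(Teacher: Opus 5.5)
Your Steps 1--4 follow the paper's route: the decomposition of \cite{HLSH18}, the martingale functional LIL, transfer through $\Phi_{\alpha,\beta}$, and the variational problem. They correctly lead to $\limsup_n X_n/\sqrt{2n\log\log n}=\sigma/(1-\alpha)$, where $\sigma^2=\lim_n\langle M\rangle_n/n$ for the martingale $M$ in $X_n=M_n+\alpha\overline{X}_n+\beta\underline{X}_n+O(1)$.

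The gap is Step 5: the claim that plugging in ``should give'' $2p\sqrt{p(1-p)}/(p-(2p-1)a)$ is false. In that decomposition $\alpha=a(2p-1)/p$ and $M_n=W_n=\frac{1}{2p}(Y_{n+1}-\Delta_n)$. Here $Y$ is the martingale part of the Doob decomposition of $X$, with $\langle Y\rangle_n/n\to 4p(1-p)$. Hence $\sigma^2=4p(1-p)/(2p)^2=(1-p)/p$, which is the prefactor $\sqrt{(1-p)/p}$ in the scaling limit of \cite{HLSH18}. Then $\sigma/(1-\alpha)=\sqrt{p(1-p)}/(p-(2p-1)a)$, smaller than the target by the factor $2p$. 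The displayed constant only appears if you use the rate $2\sqrt{p(1-p)}$ of $Y$ instead of that of $W$. This is exactly the slip in the paper's Lemma~\ref{lem:martingale-term-convergence}, which treats $W_n$ as $Y_n$ plus a bounded term and drops the factor $\frac{1}{2p}$.

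Your check at $p=1/2$ cannot detect a factor $2p$. A check that does: for $a=b=0$ you have $\alpha=\beta=0$, and $X$ is exactly the persistent walk that reverses with probability $p$ and continues with probability $1-p$. Its increments form a stationary two-state chain with lag-one correlation $1-2p$, so its asymptotic variance is $(1-p)/p$ and its LIL constant is $\sqrt{(1-p)/p}$. The stated formula instead gives $2\sqrt{p(1-p)}$, which tends to $0$ as $p\to 0$ even though the walk becomes nearly ballistic. So the statement as displayed is off by the factor $2p$ for every $p\neq 1/2$, and Step 5 cannot be closed. Carried out correctly, your argument proves the constants $\sqrt{p(1-p)}/(p-(2p-1)a)$ and $-\sqrt{p(1-p)}/(p-(2p-1)b)$.

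Two smaller points. In Step 3, the Davis contraction argument makes $\Phi_{\alpha,\beta}$ Lipschitz on $C([0,1])$ only under $|\alpha\beta|<(1-\alpha)(1-\beta)$. Since $\alpha,\beta$ both have the sign of $2p-1$ (or vanish), this amounts to $\alpha+\beta<1$, which fails for instance when $a=b=1$ and $p\ge 2/3$. So ``$\alpha,\beta<1$'' does not justify your claim, and that regime needs a separate continuity argument at the limit points in $\sigma\mathcal{K}$.

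In Step 4, the Lagrange computation over ``descend, then climb'' paths is unnecessary, and incomplete since general paths interleave excursions. The paper's reduction to the running maximum settles it at once:
\begin{itemize}
\item $x(1)\le\overline{x}(1)$;
\item $\overline{x}'=0$ a.e.\ off $\{x=\overline{x}\}$;
\item a.e.\ on $\{x=\overline{x}\}$ one has $\underline{x}'=0$ and hence $(1-\alpha)\overline{x}'=f'$.
\end{itemize}
Therefore $(1-\alpha)\overline{x}(1)\le\int_0^1|f'|\le\sigma$, with equality for $f(t)=\sigma t$. In particular, a preliminary negative excursion only wastes energy, whatever the sign of $\beta$.
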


Because the model's underlying randomness is heavily influenced by the dynamic, self-interacting environment of the rotors, classical i.i.d. techniques are insufficient. Instead, we leverage the almost sure convergence of the rescaled predictable variation of the rotor mechanism in conjunction with the functional Law of the Iterated Logarithm for martingales, to reformulate the derivation of the constants in the Law of the Iterated Logarithm as a calculus of variations problem.

\textbf{Outline.} The remainder of this paper is structured as follows. In Section \ref{sec:prelim}, we formalize the martingale decomposition of $p$-walks and establish the necessary almost sure convergence for the predictable quadratic variation of the martingale part. We also introduce the perturbation mapping as well as the functional LIL our approach is based on. In Section \ref{sec:func}, we construct the continuous time interpolation and apply the functional LIL. Finally, in Section \ref{sec:var}, we evaluate the extremum of the transformed Strassen set to conclude the proof of Theorem \ref{thm:main}.

\section{Preliminaries}\label{sec:prelim}
\subsection{Results on $p$-Walks}
We define here $p$-walks on $\Z$ and collect some important results. Furthermore, we improve the convergence results of the quadratic variation from \cite{HLSH18} to almost sure convergence.

\textbf{$p$-Walks on $\Z$.} Let the state space of the walker be the integer lattice $\mathbb{Z}$. At any discrete time $n \in \mathbb{N}_0$, the environment is described by a rotor configuration $\rho_n \in \{-1, 1\}^{\mathbb{Z}}$, where $\rho_n(x)$ denotes the direction of the rotor at vertex $x$. Let $X_n \in \mathbb{Z}$ denote the position of the walker at time $n$, with the initial position fixed at $X_0 = 0$.

The dynamics of the $p$-rotor walk are governed by a sequence of independent Bernoulli random variables. Let $p \in (0,1)$ be fixed. When the walker visits a vertex $x = X_n$, the rotor at that vertex updates according to the rule:
$$\rho_{n+1}(x) = 
\begin{cases} 
\rho_n(x) & \text{with probability } p \\
-\rho_n(x) & \text{with probability } 1-p 
\end{cases}$$
For all other vertices $y \neq X_n$, the rotors remain unchanged: $\rho_{n+1}(y) = \rho_n(y)$. The walker then steps in the direction of the newly updated rotor:
$$X_{n+1} = X_n + \rho_{n+1}(X_n)$$
We define the natural filtration $\mathcal{F}_n = \sigma(X_0, \rho_0, \dots, X_n, \rho_n)$ containing the entire history of the walk and the dynamic environment up to time $n$.

\textbf{Martingale Decomposition of $p$-Walks.} A central result of \cite{HLSH18} demonstrates that the $p$-rotor walk path can be decomposed into a bounded-increment martingale perturbed by its historical extrema, when the initial rotors are given by $(a,b)$-random initial configurations for $a,b\in[0,1]$.

Let $\overline{X}_n = \max_{0 \leq k \leq n} X_k$ and $\underline{X}_n = \min_{0 \leq k \leq n} X_k$ denote the running supremum and infimum of the walk, respectively. We further define the difference increments as $\Delta_k=X_{k+1}-X_k$. We further decompose the difference sequences as
$$Y_n=\sum_{k=0}^{n-1}\Big(\Delta_k-\mathbb{E}[\Delta_k\;|\;\mathcal{F}_k]\Big),\hspace{1cm}Z_n=\sum_{k=0}^{n-1}\mathbb{E}[\Delta_k\;|\;\mathcal{F}_k].$$
Notice that $(Y_n)_{n\in\N}$ is a mean-zero martingale with respect to the filtration $(\mathcal{F}_n)_{n\in\N}.$ Furthermore, it holds that $X_n=Y_n+Z_n$. We collect here a central result we need for our proof. It can be found in \cite[Proposition 2.3.]{HLSH18}
\begin{lemma}\label{lem:martingale-decomp}
    For the $p$-walk $(X_n)_{n\in\N}$ with $(a,b)$-random initial configuration with $a,b\in[0,1]$ it holds that
    $$X_n=W_n+\alpha\overline{X}_n+\beta\underline{X}_n.$$
    Here, $\alpha$ and $\beta$ depend on the initial configuration and are given by
    $$\alpha=a\frac{2p-1}{p},\hspace{1cm}\beta=b\frac{2p-1}{p}.$$
    Furthermore, $W_n$ for $n\in\N$ is given by
    $$W_n=\frac{1}{2p}(Y_{n+1}-\Delta_n).$$
\end{lemma}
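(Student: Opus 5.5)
The plan is to compute the drift $Z_n$ explicitly from the rotor mechanism. Then we rewrite $X_n=Y_n+Z_n$ as a linear identity in $X_n$, $\overline{X}_n$ and $\underline{X}_n$.

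\textbf{Step 1: the one-step conditional drift.} Given $\mathcal{F}_k$, the walker at $x=X_k$ keeps the rotor with probability $p$ and flips it with probability $1-p$. Hence
$$\mathbb{E}[\Delta_k\mid\mathcal{F}_k]=(2p-1)\,\rho_k(X_k)\quad\text{(or its conditional mean)}.$$
To get the constants $\alpha,\beta$ exactly (rather than up to a fluctuating sum of i.i.d.\ initial rotors), the rotors at sites not yet visited must enter only through their means. So I would read $\mathcal{F}_k$ as revealing $\rho_0(x)$ only once $x$ has been visited, which is the convention of \cite{HLSH18}. With this convention, $\mathbb{E}[\rho_k(X_k)\mid\mathcal{F}_k]$ equals $2a-1$ at a first visit to a positive site and $1-2b$ at a first visit to a negative site. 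At the origin at time $0$ it equals $0$.

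\textbf{Step 2: rotors at revisited sites.} This is the key combinatorial observation. Suppose $x=X_k$ was visited before time $k$, and let the last exit from $x$ have been in direction $e$. Then the rotor at $x$ still points in direction $e$. Because the walk is nearest-neighbour, it must have returned to $x$ from $x+e$, so $\Delta_{k-1}=-e$. Therefore $\rho_k(X_k)=-\Delta_{k-1}$ at every revisit.

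At a first visit $k\ge1$, the site $X_k$ is a new maximum or a new minimum. In that case $\Delta_{k-1}=+1$ or $-1$ respectively.

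\textbf{Step 3: summing the drift.} Sum over $k=1,\dots,n-1$. Writing $-\Delta_{k-1}$ at every time and correcting at first-visit times gives
$$Z_n=(2p-1)\Big[-X_{n-1}+\sum_{\text{new max}}\big(1+(2a-1)\big)+\sum_{\text{new min}}\big((1-2b)-1\big)\Big]
=(2p-1)\big[-X_{n-1}+2a\,\overline{X}_{n-1}+2b\,\underline{X}_{n-1}\big].$$
Now substitute into $X_n=Y_n+Z_n$ and use $X_{n-1}=X_n-\Delta_{n-1}$. Collecting the $X_n$ terms yields
$$2p\,X_n=Y_n+(2p-1)\Delta_{n-1}+2(2p-1)\big(a\,\overline{X}_{n-1}+b\,\underline{X}_{n-1}\big).$$
Dividing by $2p$ gives the coefficients $\alpha=a(2p-1)/p$ and $\beta=b(2p-1)/p$.

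\textbf{Step 4: index bookkeeping.} The remaining and most error-prone step is to shift indices so that the extrema appear at time $n$ and the residual takes the stated form $W_n=\frac{1}{2p}(Y_{n+1}-\Delta_n)$. The idea is to run the same computation up to time $n$, which means including the step $k=n$. We then use $Y_{n+1}=Y_n+\Delta_n-\mathbb{E}[\Delta_n\mid\mathcal{F}_n]$. We also use that $\overline{X}_n,\underline{X}_n$ differ from $\overline{X}_{n-1},\underline{X}_{n-1}$ exactly when $X_n$ is a first visit, and this is precisely the case where $\mathbb{E}[\Delta_n\mid\mathcal{F}_n]$ involves $2a-1$ or $1-2b$ rather than $-\Delta_{n-1}$.

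I expect the main obstacle to be checking these boundary cases carefully: first visits, the origin's symmetric rotor, and time $0$. The goal is to verify that all the $\pm1$ corrections cancel exactly.
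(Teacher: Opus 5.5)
Your argument is correct. The paper gives no proof of this lemma; it is quoted from \cite{HLSH18}. Your computation is a sound self-contained derivation: the conditional drift $(2p-1)\rho_k(X_k)$, the identity $\rho_k(X_k)=-\Delta_{k-1}$ at revisits, and the corrections $2a$ resp.\ $-2b$ at the $\overline{X}_{n-1}$ new-maximum and $|\underline{X}_{n-1}|$ new-minimum times. Two remarks.

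First, the step you flag as the main obstacle is immediate. Your Step 3 identity holds for every $n\ge 1$ (for $n=1$ both sides are $0$), so apply it with $n+1$ in place of $n$:
\[
X_n+\Delta_n=X_{n+1}=Y_{n+1}+(2p-1)\bigl[-X_n+2a\overline{X}_n+2b\underline{X}_n\bigr].
\]
Collecting the $X_n$ terms gives $2pX_n=Y_{n+1}-\Delta_n+2(2p-1)(a\overline{X}_n+b\underline{X}_n)$. This is exactly the claimed identity with $W_n=\frac{1}{2p}(Y_{n+1}-\Delta_n)$, valid for all $n\ge 0$ and with no case analysis. Your alternative route also closes: use $Y_{n+1}=Y_n+\Delta_n-\mathbb{E}[\Delta_n\mid\mathcal{F}_n]$ and split into revisit, new maximum and new minimum, and the corrections cancel in each case.

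Second, your reinterpretation of the filtration is necessary, not just a convention. The paper's literal $\mathcal{F}_n=\sigma(X_0,\rho_0,\dots,X_n,\rho_n)$ contains the whole initial configuration. Under it, the drift at a first visit is $(2p-1)\rho_0(X_k)$ itself rather than its mean. Then $2a\overline{X}_n$ and $2b\underline{X}_n$ are replaced by the random sums $\sum_{x=1}^{\overline{X}_n}(\rho_0(x)+1)$ and $\sum_{x=\underline{X}_n}^{-1}(\rho_0(x)-1)$, and an extra term $(2p-1)\rho_0(0)$ appears. Already at $n=0$ this gives $W_0=-(2p-1)\rho_0(0)/(2p)\neq 0$ for $p\neq\frac12$.

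With your filtration (e.g.\ $\sigma(X_0,\dots,X_k)$) the lemma holds exactly. The one point you should state explicitly is why $\rho_0(X_k)$ keeps its marginal law given the past at a first visit. The reason is that the path up to time $k$ is a function only of the break/flip coins before time $k$ and of the initial rotors at previously visited sites. All of these are independent of $\rho_0(X_k)$.
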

We want to emphasize here, that it holds for the constants $\alpha,\beta$ appearing in Lemma \ref{lem:martingale-decomp}, that they are strictly less then $1.$ In fact, because $p\in(0,1)$ it holds
$\alpha=a\frac{2p-1}{p}=a(2-\frac{1}{p})<a\leq 1,$
and analogously for $\beta.$

\textbf{Quadratic Variation.} Another important property that we need is, that the quadratic variation of the martingale $(Y_n)_{n\in\N}$ converges to a constant almost surely when divided by $n$. Let us define the quadratic variation process as
$$V_n=\sum_{k=1}^{n}\mathbb{E}\left[\left(\Delta_k-\mathbb{E}[\Delta_k\;|\;\mathcal{F}_k]\right)^2|\mathcal{F}_k\right].$$
In \cite{HLSH18}, it was only shown that convergence holds in probability, as this suffices to apply the function central limit theorem for martingales. We thus need to upgrade the convergence results, as it is crucial for the application of the function LIL that the quadratic variation converges almost surely.
\begin{lemma}\label{lem:martingale-convergence}
    Consider the martingale $(Y_n)_{n\in\N}$. It then holds that
    $$\lim_{n\rightarrow\infty}\frac{\max\{|Y_k|\;|\;0\leq k\leq n\}}{n}=0\hspace{1cm}\text{almost surely}.$$
\end{lemma}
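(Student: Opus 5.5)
Since $\max_{k\le n}|Y_k|$ is nondecreasing in $n$, it suffices to show $Y_n/n\to 0$ almost surely. Indeed, if $|Y_k|\le \varepsilon k$ for all $k\ge K(\omega)$, then
$$\max_{k\le n}|Y_k|\le \max_{k\le K}|Y_k|+\varepsilon n,$$
and dividing by $n$ and letting $n\to\infty$, then $\varepsilon\downarrow 0$, gives the claim. So the whole statement reduces to a strong law of large numbers for the martingale $(Y_n)_{n\in\N}$.

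\textbf{Bounded increments.} The increments are $\xi_k=\Delta_k-\mathbb{E}[\Delta_k\mid\mathcal{F}_k]$. Since $\Delta_k\in\{-1,1\}$, we have $|\mathbb{E}[\Delta_k\mid\mathcal{F}_k]|\le 1$, and hence $|\xi_k|\le 2$ deterministically. In particular $\mathbb{E}[\xi_k^2\mid\mathcal{F}_k]\le 4$, so $\sum_k \mathbb{E}[\xi_k^2]/k^2\le 4\sum_k k^{-2}<\infty$.

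\textbf{Strong law.} I will use the martingale SLLN: the transform $M_n=\sum_{k=1}^{n}\xi_k/k$ is an $L^2$-bounded martingale, because orthogonality of martingale increments gives $\mathbb{E}[M_n^2]=\sum_{k\le n}\mathbb{E}[\xi_k^2]/k^2\le 4\sum_k k^{-2}$. By the martingale convergence theorem, $M_n$ converges almost surely. Kronecker's lemma then yields $\frac1n\sum_{k\le n}\xi_k\to 0$ almost surely. The index shift between $Y_n=\sum_{k=0}^{n-1}\xi_k$ and the sum starting at $k=1$ only involves the single bounded term $\xi_0$, which is irrelevant after dividing by $n$.

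\textbf{Alternative route.} One could instead apply the Azuma–Hoeffding inequality,
$$\mathbb{P}(|Y_n|>\varepsilon n)\le 2e^{-\varepsilon^2 n/8},$$
which is summable in $n$; Borel–Cantelli then gives $\limsup_n |Y_n|/n\le\varepsilon$ almost surely. Taking a countable sequence $\varepsilon\downarrow0$ finishes the argument.

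\textbf{Main obstacle.} There is no real obstacle. The only point needing care is checking that the conditional expectation is well defined and bounded. The self-interacting rotor environment does not affect the argument at all, because only the boundedness of the increments is used.
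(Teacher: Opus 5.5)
Your proof is correct and follows the same route as the paper. In both, the martingale strong law for the bounded-increment martingale $(Y_n)$ gives $Y_n/n\to 0$ almost surely, and the same $\varepsilon$/initial-segment bound transfers this to the running maximum; the only difference is that you actually prove the martingale strong law (via the $L^2$-bounded transform $\sum_k \xi_k/k$ and Kronecker's lemma, or via Azuma--Hoeffding and Borel--Cantelli), whereas the paper simply cites it.
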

\begin{proof}
    Notice that $(Y_n)_{n\in\N}$ is a mean-zero martingale with bounded differences. Thus, by the martingale law of large numbers it holds that
    $$\frac{|Y_n|}{n}\xrightarrow{n\rightarrow\infty}0,$$
    almost surely. The analogue statement for the maximum now follows from standard analysis techniques. Since the sequence $(|Y_n|)_{n\in\N}$ converges to $0$, we can find for any $\varepsilon>0$ some number $N_0\in\N$ such that for all $n\geq N_0$ it holds that $|Y_n|\leq \varepsilon n$. Furthermore, for the initial segment $n<N_0$ the maximum is simply come constant $C_{N_0}=\max\{|Y_k|\;\;0\leq k\leq N_0\}.$ We thus obtain
    $$\frac{\max\{|Y_k|\;|\;0\leq k\leq n\}}{n}\leq \varepsilon+\frac{C_{N_0}}{n},$$
    which converges to $\varepsilon$. Since $\varepsilon>0$ is arbitrary, we obtain the desired claim.
\end{proof}
Let us write for convenience $\max\{|W_k|\;|\;0\leq k\leq n\}=\overline{|W|}_n.$ We can now prove almost sure convergence for the quadratic variation, by comparing the running maximum of the martingale term to the running maximum and minimum of the $p$-walk.
\begin{lemma}\label{lem:quadratic-variation}
    Let the $p$-walk $(X_n)_{n\in\N}$ with $(a,b)$-random initial configuration with $a,b\in[0,1]$ be given, and denote by $V_n$ for $n\in\N$ the qudratic variation of the martingale $(Y_n)_{n\in\N}$. It then holds almost surely
    $$\lim_{n\rightarrow\infty}\frac{V_n}{n}=4p(1-p).$$
\end{lemma}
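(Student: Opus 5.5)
The plan is to compute each summand of $V_n$ exactly rather than estimate it. If every conditional variance turns out to equal the same constant, the almost sure limit follows with no probabilistic input, and in particular without Lemma \ref{lem:martingale-convergence}.

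\textbf{Step 1: the conditional law of $\Delta_k$.} Fix $k$ and write $x=X_k$. Both $x$ and the current rotor $\rho_k(x)$ are $\mathcal{F}_k$-measurable. At time $k$ the rotor at $x$ is kept with probability $p$ and flipped with probability $1-p$. This choice is made by a Bernoulli variable independent of $\mathcal{F}_k$: by the construction of the model, the coin tossed at step $k$ is fresh randomness not contained in $\sigma(X_0,\rho_0,\dots,X_k,\rho_k)$. The walker then moves by $\Delta_k=\rho_{k+1}(x)$. Hence, conditionally on $\mathcal{F}_k$,
$$\Delta_k=\begin{cases}\rho_k(X_k) & \text{with probability } p,\\ -\rho_k(X_k) & \text{with probability } 1-p.\end{cases}$$
It follows that $\mathbb{E}[\Delta_k\mid\mathcal{F}_k]=(2p-1)\rho_k(X_k)$. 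This is the same quantity that drives $Z_n$ and underlies Lemma \ref{lem:martingale-decomp}.

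\textbf{Step 2: the conditional variance.} Since $\Delta_k\in\{-1,1\}$, we have $\Delta_k^2=1$. Also $\rho_k(X_k)^2=1$, so the conditional mean satisfies $\mathbb{E}[\Delta_k\mid\mathcal{F}_k]^2=(2p-1)^2$. Therefore
$$\mathbb{E}\big[(\Delta_k-\mathbb{E}[\Delta_k\mid\mathcal{F}_k])^2\,\big|\,\mathcal{F}_k\big]=\mathbb{E}[\Delta_k^2\mid\mathcal{F}_k]-\mathbb{E}[\Delta_k\mid\mathcal{F}_k]^2=1-(2p-1)^2=4p(1-p).$$
This value is deterministic and does not depend on $k$, on the position of the walker, or on the rotor configuration. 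The initial configuration $\rho_0$, and hence the choice of $a,b$, also plays no role, since only the sign $\rho_k(X_k)\in\{\pm1\}$ enters the computation.

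\textbf{Step 3: conclusion.} Summing over $k=1,\dots,n$ gives $V_n=4p(1-p)\,n$ identically. Hence $V_n/n=4p(1-p)$ for every $n$, surely and in particular almost surely. The same holds for the natural indexing $k=0,\dots,n-1$ that matches the definition of $Y_n$.

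\textbf{Main obstacle.} The only delicate point is Step 1: we must verify that the break/flip decision at time $k$ is independent of $\mathcal{F}_k$. This independence makes the conditional law of $\Delta_k$ a genuine two-point law with weights $p$ and $1-p$, rather than something influenced by the history. I would settle it by realising the dynamics through an i.i.d. sequence of Bernoulli$(p)$ coins $(U_k)$ independent of $\rho_0$. Then $\rho_{k+1}(X_k)=\rho_k(X_k)(2U_k-1)$, and $\mathcal{F}_k\subseteq\sigma(\rho_0,U_0,\dots,U_{k-1})$, which is independent of $U_k$. If instead one wanted a route mirroring the paper's hint via running extrema and Lemma \ref{lem:martingale-convergence}, the fallback is a strong law for the bounded martingale $\sum_k(\Delta_k^2-\mathbb{E}[\Delta_k^2\mid\mathcal{F}_k])$. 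However, the exact computation above makes that fallback unnecessary.
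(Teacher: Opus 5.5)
Your proof is correct, and it takes a genuinely different and much shorter route than the paper's. The paper never evaluates the summands of $V_n$. Instead it uses Lemma \ref{lem:martingale-decomp} and $\alpha,\beta<1$ to bound $\overline{X}_n$ and $|\underline{X}_n|$ by a constant times $\overline{|W|}_n$. From the martingale strong law (Lemma \ref{lem:martingale-convergence}) it gets $\overline{X}_n/n\to0$ and $\underline{X}_n/n\to0$ almost surely, and it substitutes these limits into an extrema-dependent expression for $V_n$ taken from \cite{HLSH18}.

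You instead use that the paper's $\mathcal{F}_k$ contains all of $\rho_0$. Conditionally on $\mathcal{F}_k$, the step $\Delta_k$ therefore equals $\pm\rho_k(X_k)$ with weights $p,1-p$. Each summand is then $1-(2p-1)^2$, so $V_n=4p(1-p)n$ identically. Your coin construction with $\mathcal{F}_k\subseteq\sigma(\rho_0,U_0,\dots,U_{k-1})$ correctly settles the only delicate point.

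Your route gives a deterministic identity with no probabilistic input, and it also serves as a consistency check on the paper. Under the stated definitions $V_n$ cannot depend on the running extrema at all. Moreover, the formula displayed in the paper's proof, read literally, would give $V_n/n\to1$ rather than $4p(1-p)$.

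What the paper's extrema argument buys is robustness to the choice of filtration. Suppose the initial rotors at unvisited sites were revealed only at the first visit. Then at a first visit to $x>0$ the conditional mean of $\Delta_k$ would be $(2p-1)(2a-1)$, and your Step 1 would fail at those times. The resulting deviation of $V_n$ from $4p(1-p)n$ is at most $(2p-1)^2(\overline{X}_n+|\underline{X}_n|+1)$. The paper's almost sure bounds $\overline{X}_n,|\underline{X}_n|=o(n)$ are exactly what removes it.
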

\begin{proof}
    We start by comparing the running maximum and minimum of the $p$-walk to the martingale part $(Y_n)_{n\in\N}$. Let for $n\in\N$ denote by $T_n$ the first time the current maximum was achieved by the $p$-walk, that is
    $$T_n:=\min\{k\leq n\;|\;\overline{X}_k=\overline{X}_n\}.$$
    If we now define $\beta^-=\max\{0,-\beta\}$, it then holds that
    $$X_{T_n}=W_{T_n}+\alpha X_{T_n}+\beta \underline{X}_{T_n}\Rightarrow (1-\alpha)X_{T_n}=W_{T_n}+\beta\underline{X}_{T_n}.$$
    From this we obtain
    $$(1-\alpha)\overline{X_n}\leq \overline{|W|}_n+\beta^-|\underline{X}_n|,$$
    where we used the triangle inequality, as well as the fact that the running minimum is decreasing. Analogously, we obtain for the running mininum the inequality
    $$(1-\beta)|\underline{X}_n|\leq \overline{|W|}_n+\alpha^-\overline{X}_n,$$
    where $\alpha^-=\max\{0,-\alpha\}$. If we now insert the inequality for the running minimum into the inequality for the running maximum, we obtain
    $$\overline{X}_n\leq \left(\frac{1}{1-\alpha}+\frac{\beta^-}{(1-\beta)(1-\alpha)}\right)\overline{|W|}_n+\frac{\alpha^-\beta^-}{(1-\beta)(1-\alpha)}\overline{X}_n.$$
    Notice now that if either $\alpha\geq 0$ or $\beta\geq 0$, we have that
    $$\frac{\alpha^-\beta^-}{(1-\beta)(1-\alpha)}=0.$$
    On the other hand, if both numbers are negative we have
    $$\frac{\alpha^-\beta^-}{(1-\beta)(1-\alpha)}=\frac{|\alpha||\beta|}{1+|\alpha|+|\beta|+|\beta||\alpha|}<1.$$
    We thus obtain
    $$\overline{X}_n\leq C_{\alpha,\beta}\overline{|W|}_n,\hspace{1cm}\text{where}\hspace{1cm}C_{\alpha,\beta}>0.$$
    Since $W_n$ is simply the martingale $Y_n$ plus a bounded increment term, we obtain therefore from Lemma \ref{lem:martingale-convergence} that
    $$\lim_{n\rightarrow\infty}\frac{\overline{X}_n}{n}=0\hspace{1cm}\text{almost surely}.$$
    Via the same argument, we obtain the analogous statement for the running minimum. The result now follows from the representation of the quadratic variation as derived in the proof of \cite[Theorem 2.7.]{HLSH18}
    $$V_n=n-(2p-1)^2\left(-(1-2b)\underline{X}_n+(1-2a)\overline{X}_n+\underline{X}_n-\overline{X}_n+1\right).$$
    If we divide this expression by $n$, and use the limit as derived for the running minimum and maximum, we obtain
    $$\lim_{n\rightarrow\infty}\frac{V_n}{n}=1-(2p-1)^2=4p-4p^2=4p(1-p)\hspace{1cm}\text{almost surely}.$$
\end{proof}

\subsection{Perturbation Mapping and Functional LIL}
In this section we define the perturbation mapping and introduce the functional Law of the Iterated Logarithm for martingales.

\textbf{Continuous Perturbation Map.} Let $C_{ac}([0,1])$ denote the space of absolutely continuous real-valued functions on $[0,1]$ equipped with the uniform topology induced by the supremum norm $||f||_\infty = \sup_{0 \leq t \leq 1} |f(t)|$.

We define for $\alpha,\beta\in\R$ the double perturbation mapping $\Phi_{\alpha,\beta}: C_{ac}([0,1]) \to C([0,1])$. For a given path $f \in C_{ac}([0,1])$ with $f(0)=0$, $g = \Phi_{\alpha,\beta}(f)$ is defined as the unique continuous solution to the pathwise equation
\begin{align}\label{eq:perturbation-mapping}\tag{PM}
g(t) = f(t) + \alpha \sup_{0 \leq s \leq t} g(s) + \beta \inf_{0 \leq s \leq t} g(s).
\end{align}
Provided $\alpha < 1$ and $\beta < 1$, the mapping $\Phi_{\alpha,\beta}$ is well-defined. We want to note here, that without the additional assumption of absolute continuity, there could exist more than one solution to the equation (\ref{eq:perturbation-mapping}), see \cite{D96}. Another important property that we need for the perturabtion mapping is, that it preserves absolute continuity. Let us introduce for $f\in C_{ac}([0,1])$ the notation for the running maximum and minimum
$$\overline{f}(t)=\max\{f(s)\;|\;s\in[0,t]\},\hspace{1cm}\underline{f}(t)=\min\{f(x)\;|\;s\in[0,t]\}.$$
\begin{lemma}\label{lem:absolute-cont-preserved}
    The perturbation mapping $\Phi_{\alpha,\beta}$ for $\alpha,\beta<1$ preserves absolute continuity.
\end{lemma}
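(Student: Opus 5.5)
We need to show that if $f$ is absolutely continuous with $f(0)=0$ and $g=\Phi_{\alpha,\beta}(f)$, then $g$ is absolutely continuous. The plan is to prove the stronger statement that $g$ has modulus of continuity controlled by that of $f$. Concretely, for all $0\le s\le t\le 1$ we aim for
\[
\sup_{u\in[s,t]}|g(u)-g(s)|\le C_{\alpha,\beta}\sup_{u\in[s,t]}|f(u)-f(s)|,
\]
with $C_{\alpha,\beta}$ depending only on $\alpha,\beta<1$. This mirrors the proof of Lemma~\ref{lem:quadratic-variation}, where $\overline{X}_n$ and $|\underline{X}_n|$ were bounded by $\overline{|W|}_n$. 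Absolute continuity then follows in a second step.

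\textbf{Step 1 (local increment bound).} Fix $s$. Set $\omega=\sup_{u\in[s,t]}|f(u)-f(s)|$, and write $M=\sup_{u\in[s,t]}(\overline{g}(u)-\overline{g}(s))\ge0$ and $m=\sup_{u\in[s,t]}(\underline{g}(s)-\underline{g}(u))\ge0$ for the gains of the running max and min on $[s,t]$.

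If $M>0$, take $\tau\in[s,t]$ to be the first time the new maximum $\overline g(s)+M$ is reached, so $g(\tau)=\overline g(\tau)$. Subtracting \eqref{eq:perturbation-mapping} at $s$ from the same equation at $\tau$ gives
\[
g(\tau)-g(s)=f(\tau)-f(s)+\alpha M+\beta(\underline g(\tau)-\underline g(s)).
\]
Since $g(\tau)-g(s)\ge \overline g(\tau)-\overline g(s)=M$, we get $(1-\alpha)M\le \omega+\beta^- m$. Symmetrically, $(1-\beta)m\le\omega+\alpha^- M$.

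Substituting one inequality into the other yields $M,m\le C\omega$. The coefficient check is exactly as in Lemma~\ref{lem:quadratic-variation}: $\alpha^-\beta^-/((1-\alpha)(1-\beta))<1$. Finally, $|g(u)-g(s)|\le\omega+|\alpha|M+|\beta|m\le C'\omega$.

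\textbf{Step 2 (absolute continuity).} Take disjoint intervals $(s_i,t_i)$ with $\sum(t_i-s_i)<\delta$. Apply Step 1 on each interval, and choose $u_i\in[s_i,t_i]$ attaining $\sup_{u\in[s_i,t_i]}|f(u)-f(s_i)|$. Then
\[
\sum_i|g(t_i)-g(s_i)|\le C'\sum_i|f(u_i)-f(s_i)|.
\]
The intervals $(s_i,u_i)$ are still disjoint and have total length less than $\delta$. Hence the absolute continuity of $f$ makes the right-hand side smaller than $C'\varepsilon$, which proves the lemma.

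\textbf{Main obstacle.} The delicate part is Step 1, specifically the first-hitting-time argument when $\alpha$ or $\beta$ is negative. There the max and min terms push against each other, and closing the coupled inequalities relies on $\alpha,\beta<1$ exactly as in Lemma~\ref{lem:quadratic-variation}. The argument also uses existence and continuity of $g$, which the paper asserts when it states that $\Phi_{\alpha,\beta}$ is well defined. If the hitting-time argument proves awkward, the fallback is to first show that $g$ is Lipschitz whenever $f$ is piecewise linear, and then approximate a general $f$.
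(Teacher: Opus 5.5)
Your proof is correct, and it takes a different route from the paper's.

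\textbf{The paper's route.} It argues via the Banach--Zarecki criterion. First, $g=f+\alpha\overline g+\beta\underline g$ is continuous and of bounded variation, being an absolutely continuous function plus two monotone ones. Second, $g$ has the Luzin (N) property: $[0,1]$ is split into $\{g=\overline g\}$, $\{g=\underline g\}$ and the complement, and on each interval where one running extremum is frozen, $g$ is an affine image of $f$.

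\textbf{Your route.} You prove a quantitative local oscillation estimate by rerunning the coupled max/min inequalities of Lemma~\ref{lem:quadratic-variation} on a subinterval $[s,t]$. This step is sound:
\begin{itemize}
\item any $\tau\in(s,t]$ with $g(\tau)=\overline g(t)$ automatically satisfies $g(\tau)=\overline g(\tau)$;
\item $M,m<\infty$ by continuity of $g$;
\item the substitution closes because $\alpha^-\beta^-/((1-\alpha)(1-\beta))<1$.
\end{itemize}
You then check the $\varepsilon$--$\delta$ definition directly, replacing $(s_i,t_i)$ by the shorter $(s_i,u_i)$.

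\textbf{What each approach buys.} Yours is more elementary: it needs no Banach--Zarecki theorem and no countable decomposition of $\{g=\overline g\}$. It also gives more:
\begin{itemize}
\item explicit constants;
\item preservation of any modulus of continuity of $f$, so Lipschitz or H\"older paths stay Lipschitz or H\"older;
\item since $M,m\le C\omega$, absolute continuity of $\overline g$ and $\underline g$ directly. The paper only remarks on this after its proof, yet relies on it in Lemma~\ref{lem:variation-calculus}.
\end{itemize}
Your Step~1 also holds for any continuous $f$ admitting a continuous solution $g$; absolute continuity of $f$ enters only in Step~2. The paper's route, in turn, makes explicit the local identity $(1-\alpha)g=f+\beta c_i$ on $\{g=\overline g\}$. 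That identity is what underlies the derivative comparison between $\tilde f$ and $f$ in Lemma~\ref{lem:variation-calculus}.
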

\begin{proof}
    Let $f\in C_{ac}([0,1])$ be absolutely continuous. Furthermore, set $g=\Phi_{\alpha,\beta}(f).$ Notice that both $\overline{g}$ and $\underline{g}$ are continuous and monotone, and thus of bounded variation. Since this implies that $g$ can be written as a sum of functions of bounded variation, $g$ must also be of bounded variation. Since $g$ is also continuous, it remains to prove that the image of null sets under $g$ is again a null set. For this we will use that $f$ maps null sets to null sets.

    Let $Z\subseteq[0,1]$ be a set of measure $0$. Consider the sets $E^+=\{g=\overline{g}\}$ and $E^-=\{g=\underline{g}\}$. Firstly, it holds on $E^+\cap E^-$ that $g=0$, which is a single point and thus has measure $0$. Furthermore, notice that the complement of $E^-$ can be written as a disjoint union of countably many open intervals $(a_i,b_i)$. On each of these intervals, the running infimum is simply some constant $c_i$. Thus, we have on $E^+\cap(a_i,b_i)$ that $(1-\alpha)g=f+\beta c_i$. That is, $g$ is simply an affine transformation of $f$. We thus have
    $$\vert g(Z\cap E^+)\vert\leq\sum_{i\in\N}\vert(\frac{1}{1-\alpha}f+\beta c_i)(Z\cap (a_i,b_i)\cap E^+)\vert=0,$$
    where $\vert\cdot\vert$ denotes the Lebesgue measure of a set. Similarly, we obtain that $g(Z\cap E^-)$ is a null set. Finally, we notice that on each connected component of the complement of $E^+\cup E^-$, the running supremum and infimium are just a constant. We therefore also see that $g(Z\cap(E^+\cup E^-)^C)$ is a null set. Thus, we have
    $$\vert g(Z)\vert\leq \vert g(Z\cap E^+)\vert+\vert g(Z\cap E^-)\vert+\vert g(Z\cap (E^+\cup E^-)^C)\vert=0.$$
    Thus, $g$ is continuous, of bounded variation and maps null sets to null sets (it has the Luzin (N) property). It is thus absolutely continuous.
\end{proof}
Notice that this proof also implies that the running supremum and running infimum of $g$ are absolutely continuous.

\textbf{Functional Law of the Iterated Logarithm.} The fundamental tool for extracting the almost sure asymptotic envelope of the underlying martingale is the functional Law of the Iterated Logarithm. We define the Strassen set $\mathcal{K} \subset C([0,1])$ as the set of absolutely continuous functions starting at the origin with bounded Dirichlet energy:
$$\mathcal{K} = \left\{ f \in C([0,1]) : f(0)=0, \, \int_0^1 (f'(t))^2 dt \leq 1 \right\}.$$
The importance of the Strassen set is, that it appears as the set of accumulation points of the properly rescaled paths of the martingale term $(Y_n)_{n\in\N}$. Specifically, we need \cite[Theorem 4.7.]{HH80}, which we will state as the following lemma. It shows a functional Law of the Iterated Logarithm for martingale paths, which are linearly interpolated at suitable random times.
\begin{lemma}[Functional Law of the Iterated Logarithm for Martingales]\label{lem:martingale-LIL}
    In the following, we let $(\Sigma_n)_{n\in\N}$ be a filtration, $(S_n)_{n\in\N}$ be a martingale with respect to this filtration and we let $D_n=S_n-S_{n-1}$ be the martingale difference sequence. Furthermore, we let $(I_n)_{n\in\N}$ be a sequence of random variables with $0<I_1\leq I_2\leq\ldots$, and $(Z_n)_{n\in\N}$ be a sequence of non-negative random variables. Also, we assume that for all $n\in\N$ the random variables $Z_n$ and $I_n$ are $\Sigma_{n-1}$ measurable.

    We then define the path of the martingale interpolated at the random times $(I_n)_{n\in\N}$ as
    $$\mathcal{S}_n(t)=\frac{1}{\phi(I_n^2)}\left(S_i+\frac{tI_n^2-I_i^2}{I_{i+1}^2-I_i^2}D_{i+1}\right),\hspace{1cm}\text{for }I_i^2\leq tI_n^2<I_{i+1}^2\text{ and }i\leq n-1,$$
    where $\phi(t)=\sqrt{2t\log\log(t)}.$ Assume that the following statements hold:
    \begin{enumerate}
        \item The following limit holds almost surely 
        $$\lim_{n\rightarrow\infty}\frac{1}{\phi(I_n^2)}\sum_{i=1}^n\left(D_i\mathbbm{1}_{\{|D_i|>Z_i\}}-\mathbb{E}[D_i\mathbbm{1}_{\{|D_i|>Z_i\}}\;|\;\Sigma_{i-1}]\right)=0.$$
        \item The following limit holds almost surely
        $$\lim_{n\rightarrow\infty}\frac{1}{I_n^2}\sum_{i=1}^n\left(\mathbb{E}[D_i^2\mathbbm{1}_{\{|D_i|\leq Z_i\}}\;|\;\Sigma_{i-1}]-\left(\mathbb{E}[D_i\mathbbm{1}_{\{|D_i|\leq Z_i\}}\;|\;\Sigma_{i-1}]\right)^2\right)=1.$$
        \item The sum $\sum_{i=1}^n I_i^{-4}\mathbb{E}[D_i^4\mathbbm{1}_{\{|D_i|\leq Z_i\}}\;|\;\Sigma_{i-1}]$ is finite almost surely.
        \item For the interpolation times it holds almost surely that
        $$\frac{I_{n+1}}{I_n}\xrightarrow{n\rightarrow\infty}1,\hspace{1cm}I_n\xrightarrow{n\rightarrow\infty}\infty.$$
    \end{enumerate}
    Then almost surely it holds that, the sequence of interpolated paths $(\mathcal{S}_n)_{n\in\N}$ is relatively compact in $C([0,1])$ and its set of accumulation points coincides exactly with $\mathcal{K}.$
\end{lemma}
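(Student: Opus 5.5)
The plan follows the classical route of Strassen via Skorokhod embedding, as in Hall and Heyde, and reduces the statement to Strassen's functional LIL for standard Brownian motion. The proof has three steps: truncation, embedding of the truncated martingale into a Brownian motion at random times comparable to $I_n^2$, and transfer of Strassen's theorem back to the interpolated paths.

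\textbf{Truncation.} First I would split each difference as $D_i = D_i\mathbbm{1}_{\{|D_i|\le Z_i\}} + D_i\mathbbm{1}_{\{|D_i|>Z_i\}}$ and centre both pieces conditionally on $\Sigma_{i-1}$. Since $Z_i$ is $\Sigma_{i-1}$-measurable, the centred truncated pieces $\tilde D_i$ form a martingale difference sequence $\tilde S_n=\sum_{i\le n}\tilde D_i$. The centred large pieces sum to a process whose maximum up to $n$, divided by $\phi(I_n^2)$, tends to $0$ by assumption 1. Assumption 1 only controls the endpoint $n$, not the running maximum. Because $\phi(I_k^2)\le\phi(I_n^2)$ for $k\le n$ and $\phi(I_n^2)\to\infty$, the argument of Lemma \ref{lem:martingale-convergence} upgrades this to the running maximum. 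Hence $\mathcal S_n$ and the analogous interpolation of $\tilde S$ are uniformly asymptotically equivalent, and it suffices to treat $\tilde S$. Its conditional variance $\tilde V_n$ satisfies $\tilde V_n/I_n^2\to1$ by assumption 2.

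\textbf{Embedding.} By the Skorokhod embedding theorem for martingales (enlarging the probability space if needed), there are a standard Brownian motion $B$ and stopping times $0=\tau_0\le\tau_1\le\dots$ with $\tilde S_n=B(\tau_n)$. The increments $\delta_i=\tau_i-\tau_{i-1}$ satisfy
$$\mathbb E[\delta_i\mid\mathcal G_{i-1}]=\mathbb E[\tilde D_i^2\mid\Sigma_{i-1}],\qquad \mathbb E[\delta_i^2\mid\mathcal G_{i-1}]\le C\,\mathbb E[\tilde D_i^4\mid\Sigma_{i-1}].$$
The key claim is $\tau_n/I_n^2\to1$ almost surely. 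Write $\tau_n-\tilde V_n=\sum_i(\delta_i-\mathbb E[\delta_i\mid\mathcal G_{i-1}])$. This is a martingale, and by assumption 3 (using $\tilde D_i^4\le 16\, D_i^4\mathbbm 1_{\{|D_i|\le Z_i\}}$ plus a conditional Jensen term) its increments normalised by $I_i^2$ have almost surely summable conditional second moments. By the conditional martingale convergence theorem, $\sum_i I_i^{-2}(\delta_i-\mathbb E[\delta_i\mid\mathcal G_{i-1}])$ converges almost surely. Kronecker's lemma, with $I_n\uparrow\infty$ from assumption 4, then gives $(\tau_n-\tilde V_n)/I_n^2\to0$. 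Combined with assumption 2, this yields $\tau_n/I_n^2\to1$.

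\textbf{Transfer.} Strassen's theorem says that $t\mapsto B(tI_n^2)/\phi(I_n^2)$ is almost surely relatively compact in $C([0,1])$ with limit set $\mathcal K$. It remains to compare this path with the interpolation of $B(\tau_i)$ on the grid $I_i^2/I_n^2$. I would use two facts:
\begin{itemize}
\item By assumption 4 the grid mesh tends to $0$, since $I_{i+1}^2/I_i^2\to1$.
\item Uniformly in $i\le n$, $|\tau_i-I_i^2|=o(I_n^2)$.
\end{itemize}
Together with the L\'evy-type modulus estimate $\sup_{s,t\le 2T,\,|s-t|\le\varepsilon T}|B(s)-B(t)|\le C\sqrt{\varepsilon}\,\phi(T)$ eventually, these show the sup-distance between the two rescaled paths tends to $0$. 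The same modulus bound controls the linear interpolation between consecutive grid points. Hence the accumulation set is exactly $\mathcal K$.

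I expect the main obstacle to be the second step, the almost sure asymptotic $\tau_n\sim I_n^2$. It needs the fourth-moment bound on Skorokhod stopping times and a careful Kronecker argument with random normalisers $I_i$; the $\Sigma_{i-1}$-measurability hypotheses are exactly what make that argument legitimate.
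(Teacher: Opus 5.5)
Your proposal is correct. The paper does not prove this lemma; it quotes it from \cite[Theorem 4.7]{HH80}, and your sketch reconstructs the standard Skorokhod-embedding proof behind that citation: truncation via assumption 1, then $\tau_n/I_n^2\to1$ via conditional martingale convergence and Kronecker's lemma using assumptions 2--4, then transfer of Strassen's theorem through the Brownian modulus of continuity. One point deserves to be made explicit: the embedding must be built from the conditional laws of $\tilde D_i$ given $\Sigma_{i-1}$, so that $\Sigma_{i-1}\subseteq\mathcal{G}_{i-1}$, because an embedding that conditions only on the martingale's own past gives neither $\mathbb{E}[\delta_i\mid\mathcal{G}_{i-1}]=\mathbb{E}[\tilde D_i^2\mid\Sigma_{i-1}]$ nor predictable weights $I_i^{-2}$.
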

Originally, $\mathcal{K}$ was identified by Strassen in \cite{S64} as the set of accumulation points of properly rescaled paths of the Brownian motion. In his subsequent paper, \cite{S67} it was then proven that martingales, under sufficiently strong conditions, can be approximated closely by Brownian motions, from which the functional Law of the Iterated Logarithm for martingales follows. We decided to use the more general martingale LIL from \cite{HH80}, as it is better suited for our purposes. We want to remark here, that most of the fairly technical conditions in Lemma \ref{lem:martingale-LIL} reduce to simpler forms in the case of the $p$-walk, due to the almost sure convergence of the quadratic variation process and the bounded jump size.

\section{Functional Law of the Iterated Logarithm}\label{sec:func}
In this section we will prove a functional LIL for the properly rescaled paths of the $p$-walk. The prove relies on first using Lemma \ref{lem:martingale-LIL} on the martingale part of the decomposition in Lemma \ref{lem:martingale-decomp}, and then to carry over the result to the paths of the $p$-walk via the perturbation mapping as defined in (\ref{eq:perturbation-mapping}).

To analyze the asymptotic boundaries of the $p$-rotor walk, we first translate the discrete sequences $(X_n)_{n\in\N}$, $(Y_n)_{n\in\N}$ and $(W_n)_{n\in\N}$ into continuous-time paths. We define for $n\in\N$ the interpolation times $I_n=\sqrt{n}$ deterministically. We then define the piecewise-interpolated paths as we did in Lemma \ref{lem:martingale-LIL} as
\begin{align*}
    &\mathcal{X}_n(t)=\frac{1}{\sqrt{2n\log\log(n)}}\left(X_i+\frac{tn-i}{{i+1}-i}(X_{i+1}-X_i)\right),\hspace{1cm}\text{for }i\leq tn<{i+1}\text{ and }i\leq n-1,\\
    &\mathcal{Y}_n(t)=\frac{1}{\sqrt{2n\log\log(n)}}\left(Y_i+\frac{tn-i}{{i+1}-i}(Y_{i+1}-Y_i)\right),\hspace{1cm}\text{for }i\leq tn<{i+1}\text{ and }i\leq n-1,\\
    &\mathcal{W}_n(t)=\frac{1}{\sqrt{2n\log\log(n)}}\left(W_i+\frac{tn-i}{{i+1}-i}(W_{i+1}-W_i)\right),\hspace{1cm}\text{for }i\leq tn<{i+1}\text{ and }i\leq n-1.
\end{align*}
Notice that the linear interpolation factor can be shortened to
$\frac{tn-i}{i+1-i}=tn-i,$
however, we leave it in its extended form to properly highlight the connection to Lemma \ref{lem:martingale-LIL}.
\subsection{Functional LIL for the Underlying Martingale}
We consider first the path sequence $(\mathcal{Y}_n)_{n\in\N}$. Notice that the martingale $(Y_n)_{n\in\N}$ has increments bounded by some constant $C>0$ (specifically, we can choose $C=2$). Let us thus choose for all $n\in\N$ the random variable $Z_n=C$ deterministically. Then it trivially holds that $Z_n$ and $I_n$ are $\mathcal{F}_{n-1}$ measurable. We must now check the conditions of the martingale LIL in Lemma \ref{lem:martingale-LIL}.

\begin{enumerate}
    \item By our choice of the sequence $(Z_n)_{n\in\N}$ it holds that $\mathbbm{1}_{\{|Y_i-Y_{i-1}|>Z_i\}}=0$ for all $i\in\N$. Thus it follows trivially that almost surely
    $$\lim_{n\rightarrow\infty}\frac{1}{\sqrt{n\log\log(n)}}\sum_{i=1}^n\left((Y_i-Y_{i-1})\mathbbm{1}_{\{|Y_i-Y_{i-1}|>Z_i\}}-\mathbb{E}[(Y_i-Y_{i-1})\mathbbm{1}_{\{|Y_i-Y_{i-1}|>Z_i\}}\;|\;\mathcal{F}_{i-1}]\right)=0.$$

    \item Since $(Y_n)_{n\in\N}$ is a martingale we obtain that almost surely
    \begin{align*}
    &\lim_{n\rightarrow\infty}\frac{1}{n}\sum_{i=1}^n\left(\mathbb{E}[(Y_i-Y_{i-1})^2\mathbbm{1}_{\{|Y_i-Y_{i-1}|\leq Z_i\}}\;|\;\mathcal{F}_{i-1}]-\mathbb{E}[(Y_i-Y_{i-1})\mathbbm{1}_{\{|Y_i-Y_{i-1}|\leq Z_i\}}\;|\;\mathcal{F}_{i-1}]^2\right)\\&=\lim_{n\rightarrow\infty}\frac{V_n}{n}=4p(1-p),
    \end{align*}
    where the limit follows from Lemma \ref{lem:quadratic-variation}.

    \item Using again that the increments of our martingale are bounded we obtain almost surely
    $$\sum_{i=1}^n\frac{1}{i^2}\mathbb{E}[(Y_i-Y_{i-1})^4\mathbbm{1}_{\{|Y_i-Y_{i-1}|\leq Z_i\}}\;|\;\mathcal{F}_{i-1}]\leq \sum_{i=1}^n\frac{C^4}{i^2}<\infty.$$

    \item By our choice of the interpolation times it follows
    $$\lim_{n\rightarrow\infty}\frac{\sqrt{n+1}}{\sqrt{n}}=1,\hspace{1cm}\lim_{n\rightarrow\infty}\sqrt{n}=\infty.$$
\end{enumerate}

We can thus apply the functional LIL for martingales from Lemma \ref{lem:martingale-LIL}. We collect here the result.

\begin{lemma}\label{lem:martingale-term-convergence}
    For the sequence of paths  $(\mathcal{Y}_n)_{n\in\N}$ and $(\mathcal{W}_n)_{n\in\N}$ it holds almost surely that they are relatively compact in $C([0,1])$ and the set of their accumulation points coincides exactly with $2\sqrt{p(1-p)}\mathcal{K}.$
\end{lemma}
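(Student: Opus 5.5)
The plan is to get the statement for $(\mathcal{Y}_n)_{n\in\N}$ directly from Lemma \ref{lem:martingale-LIL}, and then transfer it to $(\mathcal{W}_n)_{n\in\N}$ by a uniform comparison of paths.

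\textbf{The martingale $(\mathcal{Y}_n)_{n\in\N}$.} Conditions 1--4 were checked above with $I_n=\sqrt{n}$. Condition 2 produces the limit $4p(1-p)$ rather than $1$. I would therefore apply Lemma \ref{lem:martingale-LIL} with the rescaled times $\tilde I_n=\sqrt{4p(1-p)\,n}$.
\begin{enumerate}
\item Conditions 1, 3 and 4 are unaffected by this constant rescaling. In particular, $\sum_i \tilde I_i^{-4} C^4<\infty$ and $\tilde I_{n+1}/\tilde I_n\to 1$.
\item Condition 2 now holds with limit exactly $1$, by Lemma \ref{lem:quadratic-variation}.
\end{enumerate}
Since the $\tilde I_n^2$ are equally spaced, the interpolation in Lemma \ref{lem:martingale-LIL} at times $\tilde I_i^2\le t\tilde I_n^2<\tilde I_{i+1}^2$ is the same as $i\le tn<i+1$. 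So the lemma's path $\mathcal{S}_n$ equals $\frac{\sqrt{2n\log\log n}}{\phi(4p(1-p)n)}\,\mathcal{Y}_n$. Because $\log\log(4p(1-p)n)/\log\log n\to1$, the prefactor converges to $(4p(1-p))^{-1/2}$, deterministically and uniformly in $t$. Hence $\|\mathcal{Y}_n-2\sqrt{p(1-p)}\,\mathcal{S}_n\|_\infty\to0$ almost surely. Relative compactness and the accumulation set $2\sqrt{p(1-p)}\,\mathcal{K}$ then transfer from $\mathcal{S}_n$ to $\mathcal{Y}_n$, since multiplication by a positive constant is a homeomorphism of $C([0,1])$.

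\textbf{The path $(\mathcal{W}_n)_{n\in\N}$.} I would use the representation $W_n=\frac{1}{2p}(Y_{n+1}-\Delta_n)$ from Lemma \ref{lem:martingale-decomp}. First, $\Delta_n$ and $Y_{n+1}-Y_n$ are bounded by $2$. So the interpolated path of $W$ differs from the interpolated path built from $\frac{1}{2p}Y$ by at most $C/\sqrt{2n\log\log n}\to0$ uniformly in $t$. Shifting the index by one costs only another $O(1)$ discrepancy at the discrete nodes, which also vanishes after rescaling. Second, the same normalization argument as above applies to the resulting linear image of $\mathcal{Y}_n$. This gives relative compactness, and it identifies the accumulation set as the image of $2\sqrt{p(1-p)}\,\mathcal{K}$ under the corresponding constant multiple.

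\textbf{Main obstacle.} The delicate step is exactly this constant bookkeeping between $W$ and $Y$. The comparison above shows the multiplier $\frac{1}{2p}$ passing directly onto the accumulation set. To land on the set $2\sqrt{p(1-p)}\,\mathcal{K}$ as stated, I would have to check carefully how the decomposition of Lemma \ref{lem:martingale-decomp} normalizes $W$ relative to $Y$. If no normalization absorbs that factor, the claim for $\mathcal{W}_n$ would have to read $\frac{1}{2p}\cdot 2\sqrt{p(1-p)}\,\mathcal{K}$ instead. Apart from this, every step is routine: the error terms are all bounded by $O(1)/\sqrt{n\log\log n}$, and accumulation sets are stable under uniformly vanishing perturbations.
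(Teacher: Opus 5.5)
Your argument for $(\mathcal{Y}_n)$ is correct and follows the paper's route, and it is more careful at one point. The paper checks condition 2 of Lemma~\ref{lem:martingale-LIL} with $I_n=\sqrt{n}$, obtains the limit $4p(1-p)$ instead of the required $1$, and applies the lemma anyway. Your time change $\tilde I_n=\sqrt{4p(1-p)n}$, together with $\phi(4p(1-p)n)/\sqrt{2n\log\log n}\to 2\sqrt{p(1-p)}$, is exactly what justifies the factor $2\sqrt{p(1-p)}$. One detail should be added. Going from convergence of the scalar prefactor to $\|\mathcal{Y}_n-2\sqrt{p(1-p)}\,\mathcal{S}_n\|_\infty\to 0$ uses $\sup_n\|\mathcal{S}_n\|_\infty<\infty$, which follows from the relative compactness of $(\mathcal{S}_n)$.

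For $(\mathcal{W}_n)$, the check you defer comes out against the statement. You should commit to your own answer rather than look for a normalization that absorbs $\frac{1}{2p}$, because none exists.
\begin{itemize}
\item Lemma~\ref{lem:martingale-decomp} gives $W_i=\frac{1}{2p}(Y_{i+1}-\Delta_i)=\frac{1}{2p}\bigl(Y_i-\mathbb{E}[\Delta_i\mid\mathcal{F}_i]\bigr)$. Hence $\|\mathcal{W}_n-\frac{1}{2p}\mathcal{Y}_n\|_\infty\le\bigl(2p\sqrt{2n\log\log n}\bigr)^{-1}$.
\item The accumulation set of $(\mathcal{W}_n)$ is therefore $\frac{1}{2p}\cdot 2\sqrt{p(1-p)}\,\mathcal{K}=\sqrt{(1-p)/p}\,\mathcal{K}$.
\item Since $\sup_{f\in c\mathcal{K}}f(1)=c$, this set equals $2\sqrt{p(1-p)}\,\mathcal{K}$ only when $p=1/2$.
\end{itemize}
The paper's proof reaches the stated set by asserting that $W_n$ is $Y_n$ plus a bounded increment, which drops the factor $\frac{1}{2p}$. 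So the $\mathcal{W}_n$ half of the lemma is false as stated for $p\neq 1/2$, and your version is the correct one.

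There is an independent check. For $a=b=0$, every rotor the walker reaches points back along the edge just traversed. The $p$-walk is then exactly the persistent random walk that reverses with probability $p$, whose diffusivity is $(1-p)/p$. In this case $\alpha=\beta=0$ and $X_n=W_n$, so the accumulation set of $(\mathcal{W}_n)$ must be $\sqrt{(1-p)/p}\,\mathcal{K}$. This also matches the $\sqrt{(1-p)/p}$ scaling in the functional CLT of \cite{HLSH18}.

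The error propagates downstream. Lemma~\ref{lem:functional-LIL-p-walk} should involve $\Phi_{\alpha,\beta}(\sqrt{(1-p)/p}\,\mathcal{K})$. The constants in Theorem~\ref{thm:main} should be $\frac{\sqrt{p(1-p)}}{p-(2p-1)a}$ and $-\frac{\sqrt{p(1-p)}}{p-(2p-1)b}$, which agree with the printed ones only at $p=1/2$.
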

\begin{proof}
    We have already checked that the conditions of Lemma \ref{lem:martingale-LIL} apply to the sequence of paths $(\mathcal{Y}_n)_{n\in\N}$. The analogue statement for the sequence $(\mathcal{W}_n)_{n\in\N}$ follows from the fact that for all $n\in\N$ the random variable $W_n$ is given by adding a bounded increment to $Y_n$. This increment vanishes in the limit due to the renormalization by dividing by $\sqrt{2n\log\log(n)}$.
\end{proof}
\subsection{Application of the Perturbation Mapping}
Recall the definition of the numbers $\alpha,\beta<1$ from Lemma \ref{lem:martingale-decomp} as well as the perturbation mapping $\Phi_{\alpha,\beta}$ from (\ref{eq:perturbation-mapping}). We now have that
$X_n=W_n+\alpha\overline{X}_n+\beta\underline{X}_n.$
If we translate this to our interpolated paths, it holds for all $n\in\N$ that $\mathcal{X}_n=\Phi_{\alpha,\beta}(\mathcal{W}_n).$ From this identification, we can now easily follow the functional LIL for the $p$-walk paths.

\begin{lemma}\label{lem:functional-LIL-p-walk}
    For the sequence of paths $(\mathcal{X}_n)_{n\in\N}$ it holds almost surely that they are relatively compact in $C([0,1])$ and the set of its accumulation points coincides exactly with $\Phi_{\alpha,\beta}(2\sqrt{p(1-p)}\mathcal{K}).$
\end{lemma}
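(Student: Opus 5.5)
The plan is to transfer Lemma \ref{lem:martingale-term-convergence} to $(\mathcal{X}_n)$ through the identity $\mathcal{X}_n=\Phi_{\alpha,\beta}(\mathcal{W}_n)$. This needs $\Phi_{\alpha,\beta}$ to be continuous in the supremum norm, and the first step is to prove that it is in fact Lipschitz. Let $g_1=\Phi_{\alpha,\beta}(f_1)$ and $g_2=\Phi_{\alpha,\beta}(f_2)$, and set $\delta=\|f_1-f_2\|_\infty$. For a fixed $t$, compare the two equations (\ref{eq:perturbation-mapping}) at the times $T_1,T_2\le t$ where $\overline{g_1}(t)$ and $\overline{g_2}(t)$ are attained. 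Repeating the bookkeeping from the proof of Lemma \ref{lem:quadratic-variation}, with $\alpha^-,\beta^-$, should give
$$(1-\alpha)\|\overline{g_1}-\overline{g_2}\|\le \delta+\beta^-\|\underline{g_1}-\underline{g_2}\|+\ldots$$
and the symmetric inequality for the infima. Solving this pair of linear inequalities, as done there using $\alpha^-\beta^-<(1-\alpha)(1-\beta)$, yields $\|\overline{g_1}-\overline{g_2}\|_\infty+\|\underline{g_1}-\underline{g_2}\|_\infty\le C\delta$. Then (\ref{eq:perturbation-mapping}) itself gives $\|g_1-g_2\|_\infty\le L_{\alpha,\beta}\,\delta$.

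This Lipschitz bound also lets $\Phi_{\alpha,\beta}$ extend from $C_{ac}$ to all continuous paths in a well-defined way, which matters because $\mathcal{W}_n$ is only piecewise linear. This step is the main obstacle: the sign cases for $\alpha$ and $\beta$ must be handled carefully, and each time the running maximum is attained one has to compare the other path's running extremum at that same time. If the one-shot comparison fails, I would fall back on a time-stepping argument. On any interval where both running maxima and minima stay within $\varepsilon$, the equation is affine in $g$, and a Gronwall-type induction over the times at which new extrema are reached should then close the estimate.

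I also have to check the identity $\mathcal{X}_n=\Phi_{\alpha,\beta}(\mathcal{W}_n)$. At the grid points it is exactly Lemma \ref{lem:martingale-decomp}, scaled by $\sqrt{2n\log\log n}$. Between grid points, since $|X_{i+1}-X_i|=1$, the running maximum of the linear interpolation is linear wherever it is increasing, so the interpolation commutes with the map. Any residual discrepancy is $O(1)$ before scaling and therefore vanishes uniformly after division by $\sqrt{2n\log\log n}$. By the Lipschitz bound, such a vanishing error does not affect accumulation points.

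With these two facts, the conclusion is routine. For relative compactness, take any subsequence of $\mathcal{X}_n=\Phi(\mathcal{W}_n)$. By Lemma \ref{lem:martingale-term-convergence}, $\mathcal{W}_{n_k}$ has a further subsequence converging to some $f\in 2\sqrt{p(1-p)}\mathcal{K}$, and continuity gives $\mathcal{X}_{n_k}\to\Phi(f)$. This shows relative compactness and that every limit point lies in $\Phi_{\alpha,\beta}(2\sqrt{p(1-p)}\mathcal{K})$. Conversely, given $f$ in the Strassen set, pick $\mathcal{W}_{n_k}\to f$; then $\mathcal{X}_{n_k}\to\Phi(f)$. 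Both directions hold on the same almost sure event as Lemma \ref{lem:martingale-term-convergence}. Lemma \ref{lem:absolute-cont-preserved} ensures that the image is described consistently, since the limits are absolutely continuous.
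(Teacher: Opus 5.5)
\textbf{The gap is in your first step.} $\Phi_{\alpha,\beta}$ is in general not Lipschitz, and the sign bookkeeping from Lemma \ref{lem:quadratic-variation} does not transfer.

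There, $\beta\underline{X}_{T_n}$ could be dropped for $\beta\ge 0$ because it is nonpositive. In a comparison of two solutions the corresponding term is $\beta(\underline{g_1}(T)-\underline{g_2}(T))$, which has no sign. Done correctly, the comparison at the argmax time gives $(1-\alpha)D^+\le\delta+|\beta|D^-$ and $(1-\beta)D^-\le\delta+|\alpha|D^+$, where $D^\pm$ are the sup-distances of the running maxima and minima. This closes only if $|\alpha\beta|<(1-\alpha)(1-\beta)$. Since $\alpha,\beta$ both have the sign of $2p-1$, that means $\alpha+\beta<1$, i.e.\ $(a+b)(2p-1)<p$. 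It fails, e.g., for $a=b=1$ and $p\ge 2/3$.

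This is not an artifact of the estimate. Suppose two solutions run through the same sequence of new maxima and new minima. At a new-max turning point $(1-\alpha)\,\delta M=\beta\,\delta m$, and at the next new-min turning point $(1-\beta)\,\delta m'=\alpha\,\delta M$. So the discrepancy is multiplied by $\rho=\alpha\beta/((1-\alpha)(1-\beta))$ on every max--min alternation. For inputs whose oscillations grow fast enough that the pattern is kept, an input perturbation of size $\delta$ placed $N$ alternations earlier produces an output discrepancy of order $\rho^N\delta$.

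Your Gronwall fallback carries exactly this factor. $N$ is unbounded along $(\mathcal W_n)$, because the walk sets new maxima and new minima alternately infinitely often. For the same reason your comparison argument, applied to $f_1=f_2$, would prove uniqueness of (PM) for every continuous $f$, and that is false when $\rho>1$. Let $f$ oscillate with amplitudes $\asymp\lambda^k$, $k\le 0$, accumulating at $t=0$, with $1<\lambda<\rho$. The homogeneous discrepancy $c\rho^k$ is then negligible against $\lambda^k$ as $k\to-\infty$, and you get a one-parameter family of solutions for a single (even Lipschitz) $f$.

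\textbf{Where your argument works.} For $\alpha+\beta<1$ (all $p\le 1/2$, or $ab=0$), your argument with $|\alpha|,|\beta|$ in place of $\alpha^-,\beta^-$ is correct. It is a self-contained substitute for the citation and also handles the $O(1)$ defect in $\mathcal X_n=\Phi_{\alpha,\beta}(\mathcal W_n)$.

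\textbf{What is missing in general.} No uniform modulus is needed. The $\alpha^-\beta^-$ trick \emph{does} work for increments of a single solution over $[s,t]$, since there $\underline g(T)-\underline g(s)\le 0$. This bounds $\|g\|_\infty$ and the modulus of continuity of $g$ by $C_{\alpha,\beta}$ times those of $f$, for all $\alpha,\beta<1$. That is what underlies the preservation of relative compactness the paper cites from \cite[Lemma 2.1]{D96}.

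Limits are then identified by passing to the limit in (PM). Since $f\mapsto\overline f$ and $f\mapsto\underline f$ are $1$-Lipschitz, any limit of $\mathcal X_{n_k}$ along $\mathcal W_{n_k}\to f$ solves (PM) with input $f$. It therefore equals $\Phi_{\alpha,\beta}(f)$ whenever that solution is unique.

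Be aware that, by the example above, uniqueness can fail inside a multiple of $\mathcal K$ when $\alpha+\beta>1$. In that regime the image in the lemma has to be read as a set of solutions. This does not affect Theorem \ref{thm:main}: the bound in Lemma \ref{lem:variation-calculus} applies to every solution, and the extremal input $f(t)=t$ has a unique solution.
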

\begin{proof}
    Since $\mathcal{X}_n=\Phi_{\alpha,\beta}(\mathcal{W}_n)$ and $\Phi_{\alpha,\beta}$ preservers relative compactness (see \cite[Lemma 2.1.]{D96}), it follows from Lemma \ref{lem:martingale-term-convergence} that the sequence of paths $(\mathcal{X}_n)_{n\in\N}$ is relatively compact. Thus, by the Continuous Mapping Theorem the set of accumulation points is given by $\Phi_{\alpha,\beta}(2\sqrt{p(1-p)}\mathcal{K}).$
\end{proof}

\section{Constants in the Law of the Iterated Logarithm}\label{sec:var}
We have so far shown that the linearly interpolated paths of the $p$-walk after proper rescaling are relatively compact in $C([0,1])$, and we were also able to determine its set of accumulation points. It now remains to use this result in order to determine the exact constants in the LIL for the $p$-walk. For this end, let us notice that it holds
\begin{align}\label{eq:supremum}
\limsup_{n\rightarrow\infty}\frac{X_n}{\sqrt{2n\log\log(n)}}=\limsup_{n\rightarrow\infty}\mathcal{X}_n(1)=\sup_{g\in\Phi_{\alpha,\beta}(2\sqrt{p(1-p)}\mathcal{K})}g(1),
\end{align}
where we used the set of accumulation points as derived in Lemma \ref{lem:functional-LIL-p-walk}. Thus, determining the exact constant of the classical LIL reduces to solving the calculus of variations problem of maximizing the endpoint of all functions in the set $\Phi_{\alpha,\beta}(2\sqrt{p(1-p)}\mathcal{K})$. Notice, that similarly we can determine the constant for the $\liminf$.

\begin{lemma}\label{lem:variation-calculus}
    It holds that
    $$\sup_{g\in\Phi_{\alpha,\beta}(\mathcal{K})}g(1)=\frac{1}{1-\alpha},\hspace{1cm}\inf_{g\in\Phi_{\alpha,\beta}(\mathcal{K})}g(1)=-\frac{1}{1-\beta}.$$
\end{lemma}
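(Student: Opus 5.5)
We prove the statement for the supremum; the infimum follows by the symmetric argument, or by applying the supremum result to $-f$, since $\Phi_{\alpha,\beta}(-f)=-\Phi_{\beta,\alpha}(f)$ and $\mathcal{K}=-\mathcal{K}$. There are two halves: an upper bound valid for every $g\in\Phi_{\alpha,\beta}(\mathcal{K})$, and an explicit $f\in\mathcal{K}$ that attains it.

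\textbf{Attainment.} Take $f(t)=t$, which lies in $\mathcal{K}$ since $\int_0^1 (f')^2\,dt=1$. Try $g(t)=t/(1-\alpha)$. This $g$ is nondecreasing and nonnegative, so $\overline{g}=g$ and $\underline{g}\equiv 0$. Then
$$f+\alpha\overline{g}+\beta\underline{g}=t+\frac{\alpha t}{1-\alpha}=\frac{t}{1-\alpha}=g.$$
By uniqueness of the solution of (\ref{eq:perturbation-mapping}) for absolutely continuous $f$, we get $g=\Phi_{\alpha,\beta}(f)$, and $g(1)=1/(1-\alpha)$.

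\textbf{Upper bound.} Let $g=\Phi_{\alpha,\beta}(f)$ with $f\in\mathcal{K}$. Then $g(1)\le\overline{g}(1)$, so it suffices to show $\overline{g}(1)\le 1/(1-\alpha)$, which is stronger than needed. By Lemma \ref{lem:absolute-cont-preserved} and the remark after it, $g$, $\overline{g}$ and $\underline{g}$ are absolutely continuous.

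Split the derivatives along the contact sets. At a.e.\ point, $\overline{g}'=g'\mathbbm{1}_{\{g=\overline{g}\}}$ and $\overline{g}'\ge0$; likewise $\underline{g}'=g'\mathbbm{1}_{\{g=\underline{g}\}}$ and $\underline{g}'\le 0$. These follow because $\overline{g}$ is constant on the open set $\{g<\overline{g}\}$, and on the level set $\{g=\overline{g}\}$ the derivatives of $g$ and $\overline{g}$ agree a.e.

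Since $g$ cannot equal both $\overline{g}$ and $\underline{g}$ except where $g=0$, on $E^+=\{g=\overline{g}\}$ the derivative satisfies a.e.
$$g'=f'+\alpha g'+\beta\underline{g}'.$$
Apart from a null set, the only part of $E^+$ on which $\underline{g}'\neq 0$ is the part where $g\equiv 0$, and there $g'=0$ a.e. Hence a.e.\ on $E^+$ we have $(1-\alpha)g'=f'$, that is, $\overline{g}'=f'\mathbbm{1}_{E^+}/(1-\alpha)$.

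Therefore
$$\overline{g}(1)=\int_0^1\overline{g}'\,dt=\frac{1}{1-\alpha}\int_{E^+}f'\,dt.$$
Because $\overline{g}'\ge0$ and $1-\alpha>0$, we have $f'\ge0$ a.e.\ on $E^+$. Cauchy--Schwarz then gives
$$\int_{E^+}f'\,dt\le |E^+|^{1/2}\Big(\int_0^1 (f')^2\,dt\Big)^{1/2}\le 1,$$
and so $g(1)\le\overline{g}(1)\le 1/(1-\alpha)$.

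\textbf{Main obstacle.} The delicate step is the a.e.\ derivative bookkeeping: that $\overline{g}'$ vanishes off $E^+$ and equals $g'$ on $E^+$, and that the simultaneous contact set $E^+\cap E^-$ contributes nothing. I would handle this with the standard fact that for absolutely continuous $h_1,h_2$, one has $h_1'=h_2'$ a.e.\ on $\{h_1=h_2\}$. Applied to $(g,\overline{g})$ and $(g,\underline{g})$, it gives the identities above. On $E^+\cap E^-=\{g=0\}$ it gives $g'=\overline{g}'=\underline{g}'$ a.e.; combined with $\overline{g}'\ge0\ge\underline{g}'$ this forces all three derivatives to vanish a.e.\ there. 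In particular the $\beta\underline{g}'$ term drops out and the sign of $\beta$ never matters.
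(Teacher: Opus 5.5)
Your proof is correct and follows essentially the paper's route: both rest on the a.e.\ identity $\overline{g}'=f'\mathbbm{1}_{\{g=\overline{g}\}}/(1-\alpha)$ followed by Cauchy--Schwarz. The paper packages this as $\overline{g}=\Phi_{\alpha,\beta}\big((1-\alpha)\overline{g}\big)\in\Phi_{\alpha,\beta}(\mathcal{K})$ and then bounds monotone elements, whereas you integrate $\overline{g}'$ directly and, more carefully than the paper, justify why the $\beta\underline{g}'$ term vanishes on the contact set. One harmless slip: $E^+\cap E^-$ is the initial interval on which $g\equiv 0$, which is contained in but generally not equal to $\{g=0\}$; your argument only uses the inclusion.
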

\begin{proof}
    We will only show how to determine the value of the supremum. The infimum works analogously. For ease of notation, let us write  $\mathcal{K}'=\Phi_{\alpha,\beta}(\mathcal{K}).$

    Let now $g\in\mathcal{K}'$, then consider the running supremum $\overline{g}$. Notice that it trivially holds that $\overline{g}(1)\geq g(1)$, thus our first goal is to show $\overline{g}\in\mathcal{K}'.$ We thus need to find a funciton $\tilde{f}\in\mathcal{K}$ such that $\overline{g}=\Phi_{\alpha,\beta}(\tilde{f})$. We define $\tilde{f}$ as the exact inverse mapping of $\overline{g}$, that is
    $\tilde{f}=(1-\alpha)\overline{g}.$
    In the proof of Lemma \ref{lem:absolute-cont-preserved} we have already seen that $\overline{g}$ must also be absolutely continuous, thus we obtain the absolute continuity of $\tilde{f}$. Since the derivative of $\tilde{f}$ agrees with the derivative of $f$ on the set $\{g=\overline{g}\}$ and is $0$ everywhere else, this implies $\tilde{f}\in\mathcal{K}.$ Since $\tilde{f}$ was defined as the exact inverse of $\overline{g}$, this implies $\overline{g}\in\mathcal{K}'$. Thus we can center our view only on the monotone functions in $\mathcal{K}'$.

    Let now $g\in\mathcal{K}'$ be a monotone increasing function. Thus it holds $\overline{g}=g$ and $\underline{g}=0$. Let us choose the pre-image $f\in\mathcal{K}$, then we have
    $f=(1-\alpha)g.$
    Thus, $g$ fulfills the following energy constraint
    $$\int_0^1(g'(t))^2dt\leq \frac{1}{(1-\alpha)^2}\int_0^1(f'(t))^2dt\leq \frac{1}{(1-\alpha)^2}.$$
    From Cauchy-Schwartz inequality and the absolute continuity of $g$, it thus follows
    $$g(1)=\int_0^1g'(t)dt\leq\left(\int_0^1(g'(t))^2dt\right)^{1/2}\leq\frac{1}{1-\alpha}.$$
    Notice that this upper bound can be achieved by the function $g(t)=\frac{1}{1-\alpha}t$, and thus the claim follows.
\end{proof}
With this result, we can now prove our main theorem.
\begin{proof}[Proof of Theorem \ref{thm:main}]
    As we have seen in Equation (\ref{eq:supremum}), it suffices to find
    $$\sup_{g\in\Phi_{\alpha,\beta}(2\sqrt{p(1-p)}\mathcal{K})}g(1).$$
    Since the set is only scaled by the constant factor $2\sqrt{p(1-p)}$, we immediately obtain our result from Lemma \ref{lem:variation-calculus},
    $$\limsup_{n\rightarrow\infty}\frac{X_n}{\sqrt{2n\log\log(n)}}=\frac{2\sqrt{p(1-p)}}{1-\alpha}.$$
    The limes inferior follows analogously.
\end{proof}
\section{Conclusion}
In this paper, we have established the exact almost sure asymptotic boundaries for the $p$-rotor walk on $\mathbb{Z}$. By combining the discrete martingale representation of \cite{HLSH18} with Strassen’s functional limit theory and the calculus of variations, we translated the probabilistic fluctuations of the walk into a deterministic problem over the perturbed Strassen set. This approach elegantly completely characterizes the pathwise extremes of the walk without requiring path-dependent local time bounds. This result opens several natural avenues for future investigations.

\textbf{Exact Convergence Rates.} Our result from Theorem \ref{thm:main} adds to the central limit behaviour of $p$-walks from \cite{HLSH18}. However, while these results establish convergence and identify the correct limit, none of them give insight into how fast the process converges to its respective limits. A natural direction is thus, to analyze the speed of convergence in the central limit theorem of $p$-walks, and to prove Berry-Esseen type bounds for the central limit behaviour.

\textbf{Multi-Dimensional Analogues and Recurrence.} The simple geometry of the one-dimensional line allows us to analyze the $p$-walk in great detail. However, if we move to higher dimensions, specifically $\Z^2$, the analysis of $p$-walks becomes considerably harder. In fact, besides the value $p=1/2$, it is not clear whether $p$-walks in $\Z^2$ are recurrent or not, although they are strongly believed to be recurrent. Furthermore, considering scaling limits and the central limit behaviour, there exists no natural candidate for a scaling limit that could take the place of the perturbed Brownian motion in higher dimensions.

\textbf{Analysis of Locally Markov walks.} The $p$-walks considered in this paper fall into the wider category of locally Markov walks. A locally Markov walk $(X_n)_{n\in\N}$ is a stochastic process, whose next step solely depends on the last action performed at its current location. That is, the local routings of the particle form Markov chains at every vertex of the underlying graph.

Such locally Markov walks have been analyzed in \cite{CGLP21} and \cite{KLSH26}. It would be highly interesting, to study this class of stochastic process in general, and to establish tools to answer questions about locally Markov walks, such as proving recurrence or transience on infinite graphs, or to  establish precise mixing time bounds on finite graphs.

\bibliographystyle{alpha}
\bibliography{lit}

\textsc{Robin Kaiser}, Departement of Mathematics, CIT, Technische Universität München, Boltzmannstr. 3, D-85748 Garching bei München, Germany. \texttt{ro.kaiser@tum.de}
\end{document}